\definecolor{bred}{rgb}{1,0,.2}
\definecolor{blue}{rgb}{0,0,1}
\newtheorem{theorem}{Theorem}[section]
\newtheorem{proposition}[theorem]{Proposition}
\theoremstyle{definition}
\newtheorem{definition}[theorem]{Definition}
\def\leave#1{{}}
\title{Common positive stabilisation of open book decompositions}
\author[Joan Licata]{Joan Licata}
\address{Mathematical Sciences Institute, Australian National University}
\address{France-Australia Mathematical Sciences and Interactions ANU-CNRS International Research Laboratory}
\email{joan.licata@anu.edu.au}
\author[Vera V\'ertesi]{Vera V\'ertesi}
\address{University of Vienna}
\email{vera.vertesi@univie.ac.at}
\begin{document}

\maketitle

\begin{abstract} 

The Giroux Correspondence states that two open book decompositions supporting the same contact structure are related by a sequence of positive open book stabilisations and destabilisations.  In this note we show that any two open book decompositions supporting isotopic contact structures admit a common positive stabilisation.
\end{abstract}

\section{Introduction}

The Giroux Correspondence is a powerful tool in contact geometry, classifying which topological \textit{open book decompositions} support equivalent \textit{contact structures} \cite{Giob}.  Specifically, \textit{positive stabilisation} is a move that replaces one open book decomposition with another, and the Giroux Correspondence establishes that this move and its inverse (\textit{positive destabilisation}) suffice to relate any two open book decompositions which support isotopic contact structures.  In this note, we clarify that when  some pair of open books is related by a sequence of mixed stabilisations and destabiliations, then  this connecting sequence may be replaced a sequence consisting first of stabilisations and finally of destabilisations.  

\begin{theorem}\label{thm:cps} Suppose that $(\Sigma_1, \phi_1)$ and $(\Sigma_n, \phi_n)$ are abstract open book decompositions related by a sequence of positive  stabilisations and destabilisations.  Then there is an open book decomposition $(\Sigma, \phi)$ which is a positive stabilisation of $(\Sigma_i, \phi_i)$ for $i=1,n$.  
\end{theorem}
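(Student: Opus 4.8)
The plan is to convert the mixed sequence of stabilisations and destabilisations into a monotone one — all stabilisations, then all destabilisations — by repeatedly swapping an adjacent "destabilise then stabilise" pair for a "stabilise then destabilise" pair. Write the given sequence as $(\Sigma_1,\phi_1)\to\cdots\to(\Sigma_n,\phi_n)$ where each arrow is a single positive (de)stabilisation. If the sequence is not already monotone, locate the first place where a destabilisation is immediately followed by a stabilisation, say $(\Sigma_{i-1},\phi_{i-1})\xrightarrow{\text{destab}}(\Sigma_i,\phi_i)\xrightarrow{\text{stab}}(\Sigma_{i+1},\phi_{i+1})$. The key lemma I would prove is a \emph{commutation lemma}: given such a pair, there is an open book $(\Sigma',\phi')$ together with a stabilisation $(\Sigma_{i-1},\phi_{i-1})\to(\Sigma',\phi')$ and a destabilisation $(\Sigma',\phi')\to(\Sigma_{i+1},\phi_{i+1})$; i.e. the local picture can be reordered. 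Inserting this replaces the offending subword and strictly decreases (in an appropriate well-ordering, e.g. lexicographic on positions of the "down-up" patterns, or the number of destabilisations preceding the last stabilisation) the failure-to-be-monotone, so after finitely many steps the sequence is monotone: $(\Sigma_1,\phi_1)\to\cdots\to(\Sigma,\phi)\to\cdots\to(\Sigma_n,\phi_n)$ with $(\Sigma,\phi)$ the top, a stabilisation of both ends.

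To prove the commutation lemma I would first reduce to the case where the destabilisation $(\Sigma_{i-1},\phi_{i-1})\to(\Sigma_i,\phi_i)$ and the subsequent stabilisation $(\Sigma_i,\phi_i)\to(\Sigma_{i+1},\phi_{i+1})$ are "supported on disjoint regions" of the page $\Sigma_i$ — in that case they literally commute, since a positive stabilisation is the operation of plumbing on a Hopf band along a properly embedded arc, and a destabilisation is the inverse (de-plumbing along a page with a boundary-parallel Dehn twist curve realised as the core of a plumbed Hopf band), so performing them in disjoint regions is order-independent. The substantive case is when the two moves interact: the stabilisation arc on $\Sigma_i$ may run through the region that the destabilisation collapses. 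Here I would use the flexibility in the choice of stabilisation arc — isotoping it, and if necessary performing an auxiliary stabilisation/destabilisation pair — to push it off the de-plumbing region; concretely one can handleslide the new one-handle over the one-handle being removed. This is where almost all the work lives.

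The hard part will be making this interaction analysis uniform and complete: a destabilisation followed by a stabilisation where the stabilisation arc is forced to cross the destabilisation's handle an essential number of times. I expect to handle this by realising both moves simultaneously inside a single larger open book — stabilise $(\Sigma_i,\phi_i)$ once more so that both the arc to be de-plumbed and the arc to be plumbed are visible and disjoint there, commute them at that level, then destabilise back down — so that the "common stabilisation" $(\Sigma',\phi')$ is built as a controlled enlargement of $\Sigma_i$. One must check that the monodromies match: each elementary move changes $\phi$ by composition with a single positive Dehn twist along the relevant curve (or its conjugate under the inclusion), so verifying the commutation reduces to a relation among Dehn twists and handle attachments supported in a surface with boundary, which is a finite check once the curves are named. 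Finally I would confirm the induction terminates: each application of the lemma moves one destabilisation strictly later past one stabilisation without creating new "down-up" adjacencies earlier in the word, so the number of inversions in the $\{\text{stab},\text{destab}\}$-word strictly decreases and the process stops at a sorted word, yielding the desired $(\Sigma,\phi)$.
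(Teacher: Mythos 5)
Your global strategy --- swap each adjacent destabilise--stabilise pair for a stabilise--destabilise pair, then conclude by a monotonicity/inversion-count induction --- is exactly the structure of the paper's proof, and that part is sound. The genuine gap is in the commutation lemma itself, which is where all the mathematical content lives, and your sketch of it would not go through as written. First, you misidentify the difficulty: in the abstract setting the new stabilisation arc $\beta$ is by definition a properly embedded arc in the destabilised page $\Sigma_i$, so it cannot ``run through the region the destabilisation collapses'' or cross the removed handle; the only interaction is that $\beta$ may intersect the arc $\alpha\subset\Sigma_i$ along which the removed handle $H_\alpha$ was attached, so that the two twist curves intersect. Second, your proposed reduction to ``disjointly supported'' moves fails precisely in that case: if $\alpha$ and $\beta$ intersect essentially, no isotopy makes the curves disjoint and the two twists genuinely do not commute, while your fallback devices (handlesliding one handle over the other, inserting an auxiliary stabilisation/destabilisation pair, ``stabilise once more, commute at that level, destabilise back down'') are unproved and are essentially the theorem in disguise --- inserting new destabilisations also re-creates valleys and threatens the very induction you are running.

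The missing idea, which makes the disjointness question irrelevant, is the conjugation relation for Dehn twists, $f\tau_a f^{-1}=\tau_{f(a)}$, applied with $f=\tau_\beta^{-1}$: on $\Sigma\cup H_\alpha\cup H_\beta$ one has $\tau_\alpha\tau_\beta=\tau_\beta\,\tau_{\tau_\beta^{-1}(\alpha)}$. Hence the double stabilisation $(\Sigma\cup H_\alpha\cup H_\beta,\ \phi_{\alpha\cup\beta}\circ\tau_\alpha\circ\tau_\beta)$, which is visibly a stabilisation of $(\Sigma\cup H_\alpha,\phi_\alpha\circ\tau_\alpha)$ along $\beta$, is \emph{also} a stabilisation of $(\Sigma\cup H_\beta,\phi_\beta\circ\tau_\beta)$ --- not along $\alpha$, but along the conjugated arc $\tau_\beta^{-1}(\alpha)$. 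One never needs the two moves to commute, only the composite monodromy to admit a second factorisation as (old monodromy)$\,\circ\,$(one positive twist along an arc-plus-core curve), and the conjugated arc provides exactly that. A minor separate point: your parenthetical claim that a swap creates no new down--up adjacencies earlier in the word is false (``down down up'' becomes ``down up down''), but the inversion count still drops by one at each swap, so your termination argument survives; the paper runs the equivalent argument via the Euler-characteristic complexity and its local minima.
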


This yields the following more precise version of the Giroux Correspondence:

\begin{theorem}[Giroux Correspondence]Suppose that $(\Sigma_1, \phi_1)$ and $(\Sigma_n, \phi_n)$ are abstract open book decompositions supporting equivalent contact structures.  Then  $(\Sigma_1, \phi_1)$ and $(\Sigma_n, \phi_n)$ admit a common positive stabilisation.
\end{theorem}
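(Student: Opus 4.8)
The plan is to obtain this refined statement as an immediate consequence of Theorem~\ref{thm:cps} combined with the mixed form of the Giroux Correspondence. First I would invoke \cite{Giob}: since $(\Sigma_1,\phi_1)$ and $(\Sigma_n,\phi_n)$ support equivalent contact structures, there is a finite chain $(\Sigma_1,\phi_1)\to(\Sigma_2,\phi_2)\to\cdots\to(\Sigma_n,\phi_n)$ in which each arrow is a single positive stabilisation or a single positive destabilisation. At this point one should check the bookkeeping that makes the two ``inputs'' compatible: that ``equivalent contact structures'' in the present hypothesis is exactly the notion for which \cite{Giob} guarantees such a connecting chain, and that ``positive stabilisation'' means the same operation — Murasugi summing a page with a positive Hopf band — in the statement of Theorem~\ref{thm:cps} and in \cite{Giob}. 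These are routine.

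With such a chain in hand, the conclusion is purely formal: Theorem~\ref{thm:cps} applies verbatim to the pair $(\Sigma_1,\phi_1)$, $(\Sigma_n,\phi_n)$ (its hypothesis is precisely that they are related by a sequence of positive stabilisations and destabilisations) and produces an abstract open book $(\Sigma,\phi)$ that is simultaneously a positive stabilisation of $(\Sigma_1,\phi_1)$ and of $(\Sigma_n,\phi_n)$. Since a positive stabilisation of an open book supports the same contact structure as the original, $(\Sigma,\phi)$ automatically supports the common contact structure, so no extra argument is required and the proof ends here.

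Consequently the genuine content lies entirely in Theorem~\ref{thm:cps}, and that is where I expect the real work — and the main obstacle — to be. The natural approach there is induction on the length of the mixed sequence, the inductive heart being a ``reordering'' statement: if two open books are related by one positive destabilisation followed by one positive stabilisation, they admit a common positive stabilisation; iterating this lets one push all destabilisations past all stabilisations, leaving a sequence of stabilisations up to a common apex and then destabilisations, which is exactly the assertion of the theorem. The difficulty is that the stabilisation arcs governing two consecutive moves generally do not lie in a single page and need not be disjoint, so destabilisation and stabilisation do not literally commute. I would therefore expect the key lemma to require performing auxiliary positive stabilisations first — to arrange the relevant arcs to be disjoint and co-planar in one page — before trading a destabilisation--stabilisation pair for a stabilisation--destabilisation pair, and then repeating. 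Making this arc manipulation precise, and in particular keeping track of the monodromy after each Murasugi sum, is the step I anticipate being the crux of the whole argument.
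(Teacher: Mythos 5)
Your proposal is correct and follows essentially the same route as the paper: the theorem is obtained by invoking the classical Giroux Correspondence of \cite{Giob} to produce a mixed chain of positive stabilisations and destabilisations, and then applying Theorem~\ref{thm:cps} to that chain. (A minor aside: your anticipated difficulty in Theorem~\ref{thm:cps} is milder than you fear, since in a destabilisation--stabilisation pair both arcs already lie in the common smaller page, and the paper dispenses with non-disjointness via the relation $\tau_\beta\,\tau_{\tau_\beta^{-1}(\alpha)}=\tau_\alpha\,\tau_\beta$ rather than auxiliary stabilisations.)
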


While the proof of Theorem~\ref{thm:cps} is not difficult, the distinction between arbitrary sequences of (de)stabilisations and the existence of a common positive stabilisation has not been made clearly in the literature.  (A preprint version of one of our papers is among the guilty parties.)  As we observe next, the stronger claim allows for an interesting comparison to the case of Heegaard splittings on a topological $3$-manifold.  

The Reidemeister-Singer Theorem states that any two Heegaard splittings of $M^3$ become isotopic after finitely many stabilisations.  This structure theorem sets the stage for more nuanced questions about how the topology of the manifold affects the number of stabilisations required, with both positive and negative results known.  For example, in broad classes of manifolds, any two Heegaard splittings of the same genus become isotopic with a single stabilisation \cite{Scharl} \cite{Schult} \cite{Sedg}.   In contrast, for any $n$ there exists a manifold with a pair of splittings that remain distinct after $n$ stabilisations \cite{HTT}. Open book stabilisation is more subtle than Heegaard splitting stabilisation, but analogous questions both exist and remain largely open.


\subsection{Acknowledgements}
The authors would like to thank John Etnyre and Eric Stenhede for interesting conversations on this topic.  This research was supported in part by the Austrian Science Fund (FWF) P 34318. For open access purposes, the second author has applied a CC BY public copyright license to any author-accepted manuscript version arising from this submission.

\section{Proof of Theorem~\ref{thm:cps}}
We find the Giroux Correspondence more satisfying when stated in terms of embedded open books, but abstract open books are easier to work with in the present case and we will use this language exclusively. Background and context for open books may be found in \cite{Etnyre}.

\begin{definition}
An \textit{abstract open book} is a pair $(\Sigma, \phi)$, where $\Sigma$ is a surface with non-empty boundary and $\phi$ is a mapping class of $\Sigma$.  
\end{definition}

We define stabilisation as a formal operation on the pair $(\Sigma, \phi)$.

\begin{definition}\label{def:stab}  Let $\alpha$ be a properly embedded arc in $\Sigma$.  The \textit{stabilisation along} $\alpha$ is the open book  $(\Sigma\cup H_\alpha, \phi_\alpha \circ \tau_{{\alpha}})$, where
\begin{itemize}
\item $H_\alpha$ is a one-handle with attaching sphere $\partial \alpha \subset \partial \Sigma$;
\item $\tau_{{\alpha}}$ denotes a positive Dehn twist around the simple closed curve formed by $\alpha$ and the core of $H_\alpha$; and 
\item $\phi_\alpha$ denotes the extension of $\phi$ by the identity on $H_\alpha$.  
\end{itemize}

If an open book has the form $(\Sigma\cup H_\alpha, \phi_\alpha \circ \tau_{{\alpha}})$ for some $\alpha \subset \Sigma$,  \textit{destabilisation} replaces it by $(\Sigma, \phi)$.
\end{definition}

The term \textit{stabilisition} denotes stabilisation along some unspecified arc $\alpha$.

To prove the theorem, it suffices to replace an arbitrary sequence of open books of the form

\[ (\Sigma_i, \phi_i)\xrightarrow{\text{destab}} (\Sigma_{i+1}, \phi_{i+1})\xrightarrow{\text{stab}} (\Sigma_{i+2}, \phi_{i+2})\]
 with a new sequence of the form
\[ (\Sigma_i, \phi_i)\xrightarrow{\text{stab}} (\Sigma'_{i+1}, \phi'_{i+1})\xrightarrow{\text{destab}} (\Sigma_{i+2}, \phi_{i+2}).\]

Proposition~\ref{prop:cps}, below,  establishes that this is possible.  Accepting this result, consider any sequence $(\Sigma_i, \phi_i)$ of positive stabilisations and destabilisations connecting $(\Sigma_1, \phi_1)$ to $(\Sigma_n, \phi_n)$.  Each open book decomposition has a natural notion of complexity provided by the negative Euler characteristic of $\Sigma$. Stabilisation increases this complexity by one, while destabilisation decreases it by one.  Thus, we may consider the sequence of complexities associated to the given sequence of open books.

Proposition~\ref{prop:cps} shows that an open book $(\Sigma_{i}, \phi_{i})$ in the interior of the sequence where the complexity function attains a local minimum $j$ may be replaced by $(\Sigma'_{i}, \phi'_{i})$ with complexity $j+2$.  After a finite number of such substitutions, the sequence of complexities will have local minima only at the initial and final open books and a single local maximum in the interior.  This local maximum is associated to the desired positive stabilisation of both endpoints. 

This proof provides an algorithm for replacing an arbitrary sequence of open books with a sequence that has a single open book of maximal complexity, and the Euler characteristic of the page of this common positive stabilisation is determined by the relative Euler characteristics of the initial open books and the length of the sequence connecting them.  However, one should not expect this to be the minimal common positive stabilisation, as an inefficient starting sequence will produce a more highly stabilised peak.  

Finally, we state and prove the promised Proposition~\ref{prop:cps}.

\begin{proposition}\label{prop:cps} Suppose that $\alpha, \beta$ are properly embedded arcs on $\Sigma$.  Then $(\Sigma\cup H_\alpha \cup H_\beta, \phi_{\alpha \cup \beta} \circ \tau_\alpha \circ \tau_\beta)$ is a stabilisation of both $(\Sigma\cup H_\alpha, \phi_{\alpha} \circ \tau_\alpha)$ and $(\Sigma\cup H_\beta, \phi_\beta \circ \tau_\beta)$.
\end{proposition}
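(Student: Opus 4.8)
The plan is to exhibit, for each of the two open books $(\Sigma\cup H_\alpha,\ \phi_\alpha\circ\tau_\alpha)$ and $(\Sigma\cup H_\beta,\ \phi_\beta\circ\tau_\beta)$, an arc along which stabilising it produces $(\Sigma\cup H_\alpha\cup H_\beta,\ \phi_{\alpha\cup\beta}\circ\tau_\alpha\circ\tau_\beta)$. I will write $\widehat\alpha$ for the simple closed curve $\alpha\cup\text{core}(H_\alpha)$, so $\tau_\alpha=\tau_{\widehat\alpha}$, and similarly $\widehat\beta$; and I assume $\partial\alpha\cap\partial\beta=\varnothing$, which is implicit in the notation $\Sigma\cup H_\alpha\cup H_\beta$. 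This disjointness makes $\beta$ a properly embedded arc in $\Sigma\cup H_\alpha$ whose defining curve is still $\widehat\beta$ (the arc $\beta$ and the handle $H_\beta$ do not interact with $H_\alpha$), and symmetrically for $\alpha$.

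The first case is essentially definitional. Regarding $\beta$ as an arc in $\Sigma\cup H_\alpha$, Definition~\ref{def:stab} gives the stabilisation along $\beta$ as $\bigl((\Sigma\cup H_\alpha)\cup H_\beta,\ (\phi_\alpha\circ\tau_\alpha)_\beta\circ\tau_\beta\bigr)$, whose page is $\Sigma\cup H_\alpha\cup H_\beta$. Since extension by the identity over $H_\beta$ respects composition, and $\tau_\alpha$ is supported in an arbitrarily small neighbourhood of $\widehat\alpha$ — a curve lying in the interior of $\Sigma\cup H_\alpha$ and hence disjoint from $H_\beta$ — we get $(\phi_\alpha\circ\tau_\alpha)_\beta=(\phi_\alpha)_\beta\circ(\tau_\alpha)_\beta=\phi_{\alpha\cup\beta}\circ\tau_\alpha$, so the monodromy is $\phi_{\alpha\cup\beta}\circ\tau_\alpha\circ\tau_\beta$, as wanted.

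The second case carries the content. The obvious candidate, stabilising $(\Sigma\cup H_\beta,\ \phi_\beta\circ\tau_\beta)$ along $\alpha$, produces monodromy $\phi_{\alpha\cup\beta}\circ\tau_\beta\circ\tau_\alpha$, which equals the target only when $\tau_\alpha$ and $\tau_\beta$ commute, i.e.\ essentially only when $\alpha$ and $\beta$ are disjoint — exactly the case the Proposition does not need. Instead I would stabilise along $\gamma:=\tau_\beta^{-1}(\alpha)\subset\Sigma\cup H_\beta$, where $\tau_\beta$ is the Dehn twist about $\widehat\beta$ there. Choosing the support of $\tau_\beta$ to be an annular neighbourhood of $\widehat\beta$ that misses $H_\alpha$ and fixes a collar of $\partial\alpha$ pointwise, one sees $\gamma$ is a properly embedded arc with $\partial\gamma=\partial\alpha$, its handle may be taken to be $H_\alpha$, its defining curve is $\gamma\cup\text{core}(H_\alpha)=\tau_\beta^{-1}(\widehat\alpha)$, and the page of the stabilisation is again $\Sigma\cup H_\alpha\cup H_\beta$. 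Its monodromy is $(\phi_\beta\circ\tau_\beta)_\gamma\circ\tau_{\tau_\beta^{-1}(\widehat\alpha)}$; by the same extension-and-support bookkeeping as before $(\phi_\beta\circ\tau_\beta)_\gamma=\phi_{\alpha\cup\beta}\circ\tau_\beta$, and by the conjugation identity $\tau_{f(c)}=f\circ\tau_c\circ f^{-1}$ (with $f=\tau_\beta^{-1}$, $c=\widehat\alpha$) we have $\tau_{\tau_\beta^{-1}(\widehat\alpha)}=\tau_\beta^{-1}\circ\tau_\alpha\circ\tau_\beta$; hence the monodromy is $\phi_{\alpha\cup\beta}\circ\tau_\beta\circ\tau_\beta^{-1}\circ\tau_\alpha\circ\tau_\beta=\phi_{\alpha\cup\beta}\circ\tau_\alpha\circ\tau_\beta$, as required.

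The step I expect to demand genuine care is the claim just made that $\gamma=\tau_\beta^{-1}(\alpha)$, together with the handle $H_\alpha$, is a legitimate stabilisation datum: that $\tau_\beta^{-1}$ carries the pair $\bigl(\alpha,\text{core}(H_\alpha)\bigr)$ to $\bigl(\gamma,\text{core}(H_\alpha)\bigr)$, so that $\widehat\alpha$ is taken to the curve $\widehat\gamma$ defining $\tau_\gamma$. This comes down to the choice of support of $\tau_\beta$ indicated above, which guarantees that $\tau_\beta^{-1}$ fixes $\text{core}(H_\alpha)$ and the endpoints of $\alpha$ pointwise; once that is pinned down, all remaining checks are the same one-handle-and-support bookkeeping used in the first case.
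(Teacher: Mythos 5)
Your argument is correct and is essentially the paper's own proof: the first stabilisation is definitional, and for the second you stabilise $(\Sigma\cup H_\beta,\ \phi_\beta\circ\tau_\beta)$ along $\tau_\beta^{-1}(\alpha)$ and invoke the conjugation identity $\tau_{f(a)}=f\tau_a f^{-1}$ with $f=\tau_\beta^{-1}$, exactly as the paper does via Fact 3.7 of \cite{FM}. Your extra bookkeeping about the support of $\tau_\beta$ and the endpoints $\partial\alpha$ is a harmless elaboration of the same argument.
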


\begin{proof} The first claim is immediate, as  $(\Sigma\cup H_\alpha \cup H_\beta, \phi_{\alpha \cup \beta} \circ \tau_\alpha \circ \tau_\beta)$ is the result of stabilising $(\Sigma\cup H_\alpha, \phi_{\alpha} \circ \tau_\alpha)$ along $\beta$ in $\Sigma \cup H_\alpha$.   It remains to show that $(\Sigma\cup H_\alpha \cup H_\beta, \phi_{\alpha \cup \beta} \circ \tau_\alpha \circ \tau_\beta)$ is also a stabilisation of $(\Sigma\cup H_\beta, \phi_\beta \circ \tau_\beta)$. To do so, we use elementary properties of the mapping class group.

Suppose that $a$ is a simple closed curve on a surface and let $f$ be a diffeomorphism of the surface.  Then the following equality of mapping classes holds: $f^{-1}\tau_{f(a)} = \tau_af^{-1}$. See, for example, Fact 3.7 in \cite{FM}. 

Now apply the above relation to the surface $\Sigma\cup H_\alpha \cup H_\beta$, setting $f=\tau^{-1}_{{\beta}}$ and replacing $a$ with the simple closed curve formed by $\alpha$ and the core of $H_\alpha$.  This yields
\[  \tau_\beta \tau_{\tau^{-1}_\beta(\alpha)} = \tau_\alpha\tau_\beta.\]
It follows that $(\Sigma\cup H_\alpha \cup H_\beta, \phi_{\alpha \cup \beta} \circ \tau_\alpha \circ \tau_\beta)=(\Sigma\cup H_\alpha \cup H_\beta, \phi_{\alpha \cup \beta} \circ \tau_\beta \circ\tau_{\tau^{-1}_\beta(\alpha)})$.  Thus, this open book is the result of stabilising $(\Sigma\cup H_\beta, \phi_{\beta} \circ \tau_\beta)$ along $\tau^{-1}_\beta(\alpha)$ in $\Sigma \cup H_\beta$.  
\end{proof}

%

\bibliographystyle{alpha}
\bibliography{fob}
\end{document}